\newtheorem{theorem}{Theorem}[section]
\newtheorem{remark}{Remark}[section]
\newtheorem{lemma}{Lemma}[section]
\begin{document}

\title{\bf Approximations for a Queueing Game Model with Join-the-Shortest-Queue Strategy}
\author{Qihui Bu$^1$, Liwei Liu$^2$, Jiashan Tang$^{1*}$ and Yiqiang Q. Zhao$^3$\\
{\em\small 1: College of Science, Nanjing University of Posts and Telecommunications}\\
{\em\small Nanjing 210023, Jiangsu, China}\\
{\em\small 2: School of Science, Nanjing University of Science and Technology}\\
{\em\small Nanjing 210094, Jiangsu, China}\\
{\em\small 3: School of Mathematics and Statistics, Carleton University}\\
{\em\small Ottawa, Ontario Canada  K1S 5B6}\\
{\em\small\em $^{*}$Corresponding Author: tangjs@njupt.edu.cn}}
\date{}
\maketitle

\renewcommand{\baselinestretch}{1.3}
\large\normalsize
\begin{abstract}
This paper investigates a partially observable queueing system with $N$ nodes in which each node has a dedicated arrival stream. There is an extra arrival stream to balance the load of the system by routing its customers to the shortest queue. In addition, a reward-cost structure is considered
to analyze customers' strategic behaviours. The equilibrium and socially optimal strategies are derived for the partially observable mean field limit model. Then, we show that the strategies obtained from the mean field model are good approximations to the model with finite $N$ nodes. Finally, numerical experiments are provided to compare the equilibrium and socially optimal behaviours, including joining probabilities and social benefits for different system parameters.
\par
{\bf Keywords:} Game theory, queueing systems, mean field limit, Markov process.
\end{abstract}

\section{Introduction}
Parallel queueing systems have a wide range of practical applications, such as computer networks, cognitive radio networks, transport systems. Such parallel queueing systems become more large-scaled and more common, largely due to the rapid advances of technologies, broad involvements of big data, and popularity of computer and smart phone applications. It is popular and reasonable to improve system performances by various load balancing schemes. The join-the-shortest-queue (JSQ) scheme is one of the most frequently used balancing mechanisms, in which customers make assignment decisions according to the state information about all servers in the system. Haight~\cite{FH} and Kingman~\cite{JK} first studied JSQ scheme in a queueing system with two parallel queues and derived the stationary distribution for the queue length for each queue. Until now, there is a growing number of papers that deal with queueing system with the join-the-shortest-queue scheme, see McDonald~\cite{DM} and Foley and McDonald~\cite{FM} for systems with several servers. When the number of servers is very large, readers can refer to Dawson, Tang and Zhao~\cite{DTZ}, Gupta, Balter {\it et al.}
~\cite{VG}, Bramson, Lu and  Prabhakar~\cite{MB}, Guillemin, Olivier {\it et al.}~\cite{FG} and Eschenfeldt and Gamarnik~\cite{PE}, among others.

During the last two decades, considerable attention has been devoted to the game-theoretic analysis of queueing systems. Reward-cost structure is introduced in queueing systems to discuss customers' strategic behaviours, balking or joining. According to the levels of information of the system on hand, customers maximize their expected utilities to make their decisions. Such an economic analysis of queueing system was pioneered by Naor~\cite{PN}. He investigated a single-server system with an observable queue, in which an arriving customer can observe the number of customers and then decide to join or balk. Edelson and Hildebrand~\cite{EH} studied the same queueing system and assumed that arriving customers are not informed of the queue length of the system.
Recent works on this topic include Chen and Zhou~\cite{CZ}, Hassin and Roet-Green~\cite{HR}, Do, Van Do and Melikov~\cite{DVM} and Wang {\it et al.}~\cite{WLS}. Interested readers can refer to the book of Hassin~\cite{RH} for more details.

Our study is inspired by Dawson, Tang and Zhao's work~\cite{DTZ}. They discussed a queueing network with $N$ nodes, in which $N$ is large and each queue has a dedicated input stream. In addition, there is an extra input stream, which balances the load of the network by directing its arriving customers to the shortest queue. The first contribution of 
the current paper is to introduce the reward-cost structure to Dawson, Tang and Zhao's work. We assume that customers are strategic and decide to join or balk the system according to their expected utilities. The second contribution is to provide an alternative
method to get the result, obtained in ~\cite{DTZ}, that the stationary behaviour of any queue in the parallel queueing network can be approximated by that of an $M/M/1$ queue with a modified arrival rate. Applying the Lyapunov function, we prove that the fixed point, which is the stationary distribution of the modified $M/M/1$ queue, is globally asymptotically stable. Furthermore, we justify the interchange of the limits, $N\to\infty$ and $t\to\infty$, for the finite $N$ model and verify that the limiting result is a one-point distribution about the fixed point. 
Finally, we discuss game theory in the mean field limit model rather than the actual discrete model and justify that the equilibrium strategy in the mean field limit model is an $\epsilon$-Nash equilibrium in the actual discrete model. There are very few literature about using approximation method to analyze queueing game. Xu and Hajek~\cite{XH} proposed a supermarket game in which customers decide how many queues to sample to minimize their waiting cost. Wiecek, Altman and Ghosh~\cite{WAG} proposed an admission control of an $M/M/\infty$ queueing system with shared service cost and considered that a mean field game approximates the game in the discrete model.

The organization of the paper is as follows: We review the model settings and some notation in~\cite{DTZ} and introduce a reward-cost structure in Section 
\ref{Sec:2}. In Section 
\ref{Sec:3}, we determine the Nash equilibrium and socially optimal strategies for the mean field model in the partially observable case. In Section 
\ref{Sec:4}, we obtain the approximation of the stationary distribution for the finite $N$ model, which was provided in Theorem $1$ in ~\cite{DTZ}. We prove that the fixed point is globally asymptotically stable and when $N\to\infty$, the stationary state of the finite $N$ model converges to the fixed point. In addition, we establish the convergence of the game to its mean field limit model. In Section 
\ref{Sec:5}, some numerical experiments are provided to show the impacts of system parameters on the equilibrium, 
optimal strategies and social benefits.

\section{Model description}\label{Sec:2}

In this section, we first review the model with finite $N$ nodes (referred as a finite $N$ model in the sequel) 
in~\cite{DTZ} and some notation. Meanwhile, reward and waiting cost are introduced into the finite model for further analysing customers' strategic behaviours.

A queueing system with $N$ parallel and identical queues are considered in that 
paper, where each queue has an infinite waiting space and a server with exponential service rate $\mu$. Each queue is equipped with a dedicated customer arrival stream, which follows a Poisson process with parameter $\lambda$. In addition, there is an extra Poisson arrival stream with the 
arrival rate $N\lambda_s$, and the (smart) customers from this stream join the shortest queue among $N$ queues. We conduct our following analysis under the stationary condition $\lambda_s+\lambda<\mu$.

Let $X^N_i(t)$ denote the queue length of the $i$-th queue at time $t$, $i=
1,2,\ldots,N$. For each nonnegative integer $k$, let $U^N_k(t)$ be the fraction of the number of severs with exactly $k$ customers at time $t$,
\begin{equation}
  U^N_k(t)= \frac 1{N} {\sum\limits_{i=1}\limits^{N}\delta_{k}(X^N_i(t))},
\end{equation}
where $\delta_{k}(x)=\left\{\begin{array}{cc}
                                  1,& x=k, \\
                                 0,& x\neq k.
                               \end{array}
\right.$
Then, it is clear that $\{(U^N_0(t), U^N_1(t),U^N_2(t),\ldots), t \ge 0\}$ is an infinite-dimensional continuous time Markov process with state space $\{(u^N_0,u^N_1,u^N_2,\ldots):0\leq u^N_k\leq 1, Nu^N_k\in {\bf N}, k=0,1,2,\ldots, \sum\limits^{\infty}\limits_{k=1}u^N_k=1\}$.

If the arrival customer joins a queue with $k$ customers, then the number of queues with $k$ customers is reduced by one and the number of queues with $k+1$ customers is increased by one. The state of the system changes from $N u^N$ to $N u^N-{\bf e}_{k}+{\bf e}_{k+1}$ and the corresponding arrival rate is $ Nu^N_k\lambda+N\lambda_s\delta_{k}(\min\{k':u^N_{k'}>0\})$, where ${\bf e}_{k}$ denotes a vector whose $k$-th entry is 1 and 0 elsewhere. Similarly, we can obtain other non-zero changing rates on the number of queues with $k$ customers for state $u^N=(u^N_0,u^N_1,u^N_2,\ldots)$, or equivalently $N u^N=N(u^N_0,u^N_1,u^N_2,\ldots)$, as follows:
\[\begin{array}{l}
   N u^N \to N u^N-{\bf e}_{k}+{\bf e}_{k-1}:~~N\mu u^N_k (1-\delta_0(k)), 
   \\[4mm]
   N u^N \to N u^N-{\bf e}_{k-1}+{\bf e}_{k}:~~Nu^N_{k-1}\lambda+N\lambda_s  \delta_{k-1}(\min\{k':u^N_{k'}>0\}), 
   \\[4mm]
    N u^N \to N u^N-{\bf e}_{k+1}+{\bf e}_{k}:~~ N\mu u^N_{k+1}.
\end{array}\]

In addition, the mean field limit is used to approximate the stationary performance of the above finite $N$ model with interactions among queues in terms of the limiting dynamics, which arises in the limit as $N$ goes to infinity. According to Theorem $1$ in~\cite{DTZ}, the limit of the finite $N$ model is the solution of a deterministic system and the corresponding queueing system is called the mean field limit model, which is referred to as the limiting typical queue in~\cite{DTZ}. 
All notation in the mean field limit model are consistent with those in the finite model without superscript $N$.

We further assume in this paper that customers are strategic and can choose either to join or balk upon arrivals, based on their expected utilities.  Each customer receives a reward $R$ units after its service. Additionally, customers suffer a waiting cost $C$ units per time unit in the system. If a customer joins the system, its expected utility is $R-CE(W)$, where $E(W)$ is the average sojourn time of the customer in the system, and the expected utility of a customer who balks is defined to be zero. Hence, with the objective of maximizing the expected utility, a new arrival customer decides to join the system when $R-CE(W)>0$, or to balk when $R-CE(W)<0$. It is indifferent for a customer to choose to join or balk when $R-CE(W)=0$. Throughout the paper, we assume $R\geq\frac{C}{\mu}$, which ensures that a customer must join the queue when the queue is empty. 

\section{Nash equilibrium in the mean field model}\label{Sec:3}

We consider the partially observable finite model, in which all dedicated customers are not informed about the number of customers in the system, while all smart customers from the extra arrival stream are informed about it.
It is worthwhile to note that when $N$ is large enough, there always exits an empty queue among the $N$ parallel queues in the steady state. Since in this case we always have $R-\frac{C}{\mu}>0$, the smart customers must enter the system by joining the shortest queue strategy. 

 In this section, we focus on the equilibrium and socially optimal strategies of the customers from the dedicated arrival streams, specifically, customers' strategic behaviours for maximizing customer's individual expected utility and customers joining probability for optimizing social benefits are studied. This model can be considered  as a symmetric game among indistinguishable customers. We use $q$ to  denote the parameter of the strategy of the customers. Specifically, 
$q\in [ 0,1]$ denotes the probability of joining.
Obviously, $q=0$ denotes balking and $q=1$ denotes joining.
The mean field model with game behaves as the original mean field model, but with dedicated arrival rate $\lambda q$ instead of $\lambda$.

\begin{theorem}
The mean sojourn time of a customer from the dedicated arrival stream in the mean field model is given by
\begin{equation} 
  E(W(q))=\displaystyle\frac{\mu+\lambda_s}{\mu}\times\frac{1}{\mu(1-\sigma)}.
\end{equation}

\end{theorem}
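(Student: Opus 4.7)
The plan is to compute $E(W(q))$ from the stationary distribution of a typical queue in the mean field limit, exploiting PASTA for the dedicated (Poisson) arrival stream together with FCFS exponential service.

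The first step is to identify the mean field fixed point $\{u_k\}$ from the limiting transition rates in Section \ref{Sec:2} (with $\lambda$ replaced by $\lambda q$ to reflect strategic joining). Under the stability condition $\lambda q+\lambda_s<\mu$ one expects $u_0>0$, so the smart-arrival indicator $\delta_{k-1}(\min\{k':u_{k'}>0\})$ collapses to $\delta_{k-1}(0)$, and a level-cut balance between levels $k-1$ and $k$ yields
\begin{align*}
\mu u_1 &=\lambda q\, u_0+\lambda_s,\\
\mu u_{k+1}&=\lambda q\, u_k,\qquad k\ge 1.
\end{align*}
Writing $\sigma=\lambda q/\mu$, this forces $u_k=\sigma^{k-1}u_1$ for $k\ge 1$, and normalization pins down $u_0=1-(\lambda q+\lambda_s)/\mu$. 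A short rearrangement then gives $u_1=\rho(1-\sigma)$ and $E[L]=\sum_{k\ge 1}k\,u_k=\rho/(1-\sigma)$, where $\rho=(\lambda q+\lambda_s)/\mu$.

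The second step is to convert this into the sojourn time of a dedicated customer. Because the dedicated stream is Poisson at a rate independent of the queue state, PASTA implies that a tagged dedicated arrival sees the distribution $\{u_k\}$. Under FCFS with memoryless service, any subsequent arrival (dedicated or smart) is placed strictly behind the tagged customer and cannot prolong its sojourn, so conditional on seeing $L$ customers the mean sojourn is $(L+1)/\mu$. Averaging yields
\begin{equation*}
E(W(q))=\frac{E[L]+1}{\mu}=\frac{\rho+1-\sigma}{\mu(1-\sigma)},
\end{equation*}
and substituting $\rho-\sigma=\lambda_s/\mu$ in the numerator collapses the expression to $\frac{\mu+\lambda_s}{\mu}\cdot\frac{1}{\mu(1-\sigma)}$, as claimed.

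The step I expect to demand the most care is the fixed-point identification itself: one must justify that under stability the mean field ODE admits a genuine probability vector concentrated on the geometric-plus-atom ansatz above, rather than some pathological solution in which the minimum-occupancy level drifts upward. The global asymptotic stability result announced for Section \ref{Sec:4} ultimately removes this worry, but for the purposes of this theorem it suffices to verify that the candidate $\{u_k\}$ solves the balance equations and sums to one; the remaining work is purely algebraic bookkeeping.
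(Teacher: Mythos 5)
Your proposal is correct and follows essentially the same route as the paper: both compute $E(W(q))=\sum_{k}\frac{k+1}{\mu}\pi_k$ by averaging the conditional sojourn time $(k+1)/\mu$ over the stationary distribution $\pi_0=1-\rho$, $\pi_k=\rho(1-\sigma)\sigma^{k-1}$, and the algebra agrees. The only difference is that the paper simply cites Theorem~2 of Dawson, Tang and Zhao for this distribution, whereas you re-derive it from the level-cut balance equations and make the PASTA/FCFS justification explicit --- a slightly more self-contained but equivalent argument.
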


\begin{proof}
According to Theorem $2$ 
in~\cite{DTZ}, the stationary distribution of the mean field model can be computed by,
\begin{eqnarray}
 \label{pi0} \pi_0 &=& 1-\rho, \\
 \label{pik} \pi_k &=& \rho(1-\sigma)\sigma^{k-1},
\end{eqnarray}
where $\rho=(\lambda q+\lambda_s)/\mu$ and $\sigma=\lambda q/\mu$.

Then, the mean sojourn time of a customer from the dedicated arrival stream is given by:
\begin{eqnarray}
 \nonumber E(W(q))&=& \sum\limits_{k=0}\limits^{\infty}\frac{k+1}{\mu}\pi_k\\
\nonumber &=&\frac{1}{\mu}(1-\rho)+\sum\limits_{k=1}\limits^{\infty}\frac{k+1}{\mu}\rho(1-\sigma)\sigma^{k-1}\\
 \nonumber   &=&  \displaystyle\frac{\mu+\lambda_s}{\mu}\times\frac{1}{\mu(1-\sigma)}.
\end{eqnarray}
\end{proof}

\begin{remark}
For conveniences, in the limiting model, we divide the arrival stream into the normal and the external arrival streams with parameters $\lambda q$ and $\lambda_s/\pi_0$, respectively. Note that the normal stream always visits the system while the external stream arrives to the system only when the system is empty. The probability that an arbitrary customer is from the normal stream is given by,
\begin{equation*}
  \frac{\lambda q}{\lambda q+\lambda_s} \sum\limits_{k=0}\limits^{\infty}\pi_k =\frac{\lambda q}{\lambda q+\lambda_s}.
\end{equation*}
The probability that an arbitrary customer is from the external stream is given by,
\begin{equation*}
  \frac{\lambda_s/\pi_0 \times\pi_0}{\lambda q+\lambda_s}=\frac{\lambda_s}{\lambda q+\lambda_s}.
\end{equation*}
Thus, the mean sojourn time for an arbitrary customer is
\begin{equation*}
  E(H(q))=\frac{\lambda q}{\lambda q+\lambda_s}E(W(q))+\frac{\lambda_s}{\lambda q+\lambda_s}\frac{1}{\mu}.
\end{equation*}
The expected queue length is given as follows,
\begin{eqnarray*} 
  E(L(q))&=&\sum\limits_{k=0}\limits^{\infty}k\pi_k=\sum\limits_{k=1}\limits^{\infty}k\rho(1-\sigma)\sigma^{k-1}\\
  &=&\frac{\rho}{1-\sigma}.
\end{eqnarray*}

We obtain the following relationship about the mean sojourn time, expected queue length and effective arrival rate, 
\begin{equation*}
  E(H(q))=\frac{E(L(q))}{\lambda q+\lambda_s},
\end{equation*}
which is the Little's Law for the queueing systems studied in this paper.
\end{remark}


Based on the expected sojourn time and the expression of customer's expected utility, customers' behaviours can be investigated as follows.

\begin{theorem}
Considering the Nash equilibrium in the partially observable mean field model, we have the following equilibrium strategy for the dedicated customers:
\begin{equation}\label{eq strategy}
  q^e=\left\{\begin{array}{ll}
               0, & \text{if } 0<\displaystyle\frac{R}{C}\leq \displaystyle\frac{\lambda_s+\mu}{\mu^2}, \\[4mm]
\displaystyle\frac{\mu}{\lambda}-\frac{C(\mu+\lambda_s)}{R\mu\lambda}, & \text{if } \displaystyle \frac{\lambda_s+\mu}{\mu^2}<\frac{R}{C}< \frac{\mu+\lambda_s}{\mu(\mu-\lambda)},\\[4mm]
               1,& \text{if } \displaystyle \frac{R}{C} \geq \frac{\mu+\lambda_s}{\mu(\mu-\lambda)}.
             \end{array}
  \right.
\end{equation}

\end{theorem}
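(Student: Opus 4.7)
The plan is to derive the equilibrium by the standard best-response argument for symmetric mean field games: fix a candidate strategy $q$ used by all dedicated customers, compute the expected utility that a tagged dedicated customer obtains by joining, and then identify values of $q$ for which no individual has an incentive to unilaterally deviate. Since there is a continuum of indistinguishable dedicated customers in the mean field limit, a single deviation does not perturb the aggregate arrival rate $\lambda q$, so the tagged customer's expected sojourn time under the population strategy $q$ is exactly the quantity $E(W(q))$ furnished by the preceding theorem. Define the tagged customer's net utility from joining as
\begin{equation*}
U(q) \;=\; R - C\,E(W(q)) \;=\; R - \frac{C(\mu+\lambda_s)}{\mu^{2}\bigl(1-\lambda q/\mu\bigr)}.
\end{equation*}

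First I would observe that $U(q)$ is strictly decreasing and continuous on $[0,1]$, because $\sigma=\lambda q/\mu$ is strictly increasing and strictly less than $1$ throughout the range (using the stability assumption $\lambda+\lambda_s<\mu$). This monotonicity is the key structural fact, since it guarantees that the best-response correspondence is single-valued away from the indifference threshold and yields a unique equilibrium. Next I would evaluate the two endpoints: $U(0) = R - C(\mu+\lambda_s)/\mu^{2}$ and $U(1) = R - C(\mu+\lambda_s)/\bigl(\mu(\mu-\lambda)\bigr)$.

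The equilibrium then follows by a three-way case split on the sign of $U$ at the endpoints. If $U(0)\le 0$, i.e.\ $R/C\le (\mu+\lambda_s)/\mu^{2}$, then joining is never profitable regardless of what others do, so $q^{e}=0$ is the unique best response, matching the first branch. If $U(1)\ge 0$, i.e.\ $R/C\ge (\mu+\lambda_s)/\bigl(\mu(\mu-\lambda)\bigr)$, then joining is strictly (or weakly) preferred even under the heaviest load, so $q^{e}=1$, the third branch. In the intermediate regime $U(0)>0>U(1)$, by the intermediate value theorem there is a unique $q^{e}\in(0,1)$ satisfying the indifference condition $U(q^{e})=0$; solving $R = C(\mu+\lambda_s)/\bigl(\mu^{2}(1-\lambda q^{e}/\mu)\bigr)$ for $q^{e}$ gives
\begin{equation*}
q^{e} \;=\; \frac{\mu}{\lambda} - \frac{C(\mu+\lambda_s)}{R\,\mu\,\lambda},
\end{equation*}
which is precisely the middle branch of (\ref{eq strategy}).

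I do not expect a genuine obstacle here: once $E(W(q))$ from the previous theorem is in hand, the argument is a clean application of the ``avoid the crowd'' best-response reasoning (Naor-type) in a model where a single deviation has no aggregate effect. The only subtlety worth flagging explicitly in the write-up is the justification that a unilateral deviation does not alter $E(W(\cdot))$, which is precisely what the mean field limit buys us and which distinguishes this proof from the corresponding finite-$N$ argument handled later in Section \ref{Sec:4}.
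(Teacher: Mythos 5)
Your proposal is correct and follows essentially the same route as the paper: both establish that the joining utility $S(q)=R-CE(W(q))$ is strictly decreasing on $[0,1]$ (the paper by computing $\tfrac{{\rm d}E(W(q))}{{\rm d}q}>0$ explicitly, you by noting the monotonicity of $\sigma=\lambda q/\mu$), evaluate the endpoint values $S(0)$ and $S(1)$, and split into the same three cases, solving the indifference equation $S(q)=0$ for the interior branch. The only difference is presentational; no gap.
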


\begin{proof}
The derivative of the expected sojourn time with respect to $q$ is given by
\begin{equation*} 
 \frac{{\rm d}E(W(q))}{{\rm d}q}=\frac{\lambda (\mu+\lambda_s)}{\mu(\mu-\lambda q)^2}>0,
\end{equation*}
which implies that $E(W(q))$ is strictly increasing for $q\in[0,1]$. The expected utility of a customer from a dedicated arrival stream, $S(q)=R-CE(W)$, is strictly decreasing for $q\in[0,1]$ and has a unique maximum at
\begin{equation*} 
 S(0)=R-C\frac{\lambda_s+\mu}{\mu^2},
\end{equation*}
and a unique minimum at
\begin{equation*} 
   S(1)=R-C\frac{\mu+\lambda_s}{\mu(\mu-\lambda)}.
\end{equation*}

\begin{description}
\item[Case 1: ] When $0<\displaystyle\frac{R}{C}\leq \displaystyle\frac{\lambda_s+\mu}{\mu^2}$, $S(0)\leq0$ and $S(q)\leq0$ for all $q$. Then, the best response for the customer is balking and thus the equilibrium strategy is $q^e=0$, which is the first branch of \eqref{eq strategy}.

\item[Case 2: ] When $\displaystyle \frac{\lambda_s+\mu}{\mu^2}<\frac{R}{C}< \frac{\mu+\lambda_s}{\mu(\mu-\lambda)}$, $S(0)>0$ and $S(1)<0$. $q_1=\displaystyle\frac{\mu}{\lambda}-\frac{C(\mu+\lambda_s)}{R\mu\lambda}$ is the unique solution to $S(q)=0$. When $q\in[0,q_1)$, the best response for the customer is joining.
When $q\in(q_1,1]$, balking is the best response. When $q=q_1$, all strategies are the best response. Hence, in this situation, $q^e=q_1$ is the equilibrium strategy, which is given in the second branch of \eqref{eq strategy}.

\item[Case 3: ] When $\displaystyle\frac{R}{C}\geq \frac{\mu+\lambda_s}{\mu(\mu-\lambda)}$, $S(1)\geq 0$ and $S(q)>0$ for every $q\in[0,1]$. Therefore, joining is the best response for the customer. Thus, $q^e=1$ is the unique Nash equilibrium strategy and the third branch of \eqref{eq strategy} is obtained.
\end{description}
\end{proof}

Next, we study the social welfare optimization issue which maximizes the system's overall benefits per time unit. Results are given in the following theorem.
\begin{theorem}
 In the partially observable mean field model, there is a unique mixed strategy, the customer from dedicated arrival stream joining the system with probability $q^*$, which maximizes the social benefits per time unit exists, provided by,
\begin{equation*}
  q^*=\left\{\begin{array}{ll}
               0, & R\leq \displaystyle C\frac{\lambda_s+\mu}{\mu^2}, \\
               \displaystyle \frac{\mu}{\lambda}-\frac{1}{\lambda}\sqrt{\frac{C(\lambda_s+\mu)}{R}}, & \displaystyle C\frac{\lambda_s+\mu}{\mu^2}<R<C\frac{\lambda_s+\mu}{(\mu-\lambda)^2}, \\
               1, &\displaystyle R\geq C\frac{\lambda_s+\mu}{(\mu-\lambda)^2}.
             \end{array}
\right.
\end{equation*}
\end{theorem}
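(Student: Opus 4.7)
The plan is to set up the social benefit per unit time $B(q)$ as the reward rate minus the holding cost rate and then maximize it over $q\in[0,1]$. Since in steady state the throughput equals the effective arrival rate $\lambda q + \lambda_s$, the reward rate is $R(\lambda q + \lambda_s)$, and using $E(L(q)) = \rho/(1-\sigma) = (\lambda q+\lambda_s)/(\mu-\lambda q)$ from the preceding remark, the holding cost rate is $C(\lambda q+\lambda_s)/(\mu-\lambda q)$. This gives
\begin{equation*}
B(q) \;=\; (\lambda q + \lambda_s)\left[R - \frac{C}{\mu - \lambda q}\right].
\end{equation*}

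Differentiating once and combining the resulting fractions over a common denominator, the numerator collapses to the constant $C(\mu+\lambda_s)$, yielding the clean form
\begin{equation*}
B'(q) \;=\; \lambda\left[R - \frac{C(\mu+\lambda_s)}{(\mu - \lambda q)^2}\right].
\end{equation*}
A second differentiation then gives $B''(q) = -2C\lambda^2(\mu+\lambda_s)/(\mu-\lambda q)^3 < 0$ on $[0,1]$ under the stability assumption $\lambda < \mu$, so $B$ is strictly concave on $[0,1]$ and admits a unique maximizer.

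The closed-form optimum then follows from a standard three-case argument keyed to the sign of $B'$ at the endpoints. Solving $B'(q)=0$ and keeping the positive root $\mu - \lambda q = \sqrt{C(\mu+\lambda_s)/R}$ yields the interior critical point $\mu/\lambda - (1/\lambda)\sqrt{C(\lambda_s+\mu)/R}$, which is the middle branch. The inequality $B'(0)\le 0$ is equivalent to $R \le C(\lambda_s+\mu)/\mu^2$, in which case $B$ is decreasing throughout $[0,1]$ and $q^*=0$; the inequality $B'(1)\ge 0$ is equivalent to $R \ge C(\lambda_s+\mu)/(\mu-\lambda)^2$, in which case $B$ is still increasing at the right endpoint and $q^*=1$; in the remaining parameter window the interior critical point lies strictly between $0$ and $1$. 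The main obstacle is really just the algebraic collapse of $B'(q)$ to its simple form, since without that simplification the stationary equation looks like a cubic rather than a quadratic in $\mu-\lambda q$; once that identity is observed, strict concavity and the boundary analysis are routine.
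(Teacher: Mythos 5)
Your proposal is correct and follows essentially the same route as the paper: both write the social benefit as $R(\lambda q+\lambda_s)-C(\lambda q+\lambda_s)/(\mu-\lambda q)$, observe that the first derivative simplifies to $\lambda\bigl[R-C(\mu+\lambda_s)/(\mu-\lambda q)^2\bigr]$ with a negative second derivative giving strict concavity, and then split into the same three cases according to the sign of the derivative at the endpoints. No substantive difference from the paper's argument.
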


\begin{proof}
When all customers from dedicated arrival stream adopt strategy $q$, i.e. joining the system with probability $q$, the social benefits per time unit is as follows:
\begin{equation*} 
  S^{un}_{soc}(q) = (\lambda q+\lambda_s)R-CE(L(q))
= R\left(\lambda q+\lambda_s\right)-C\frac{\lambda q+\lambda_s}{\mu-\lambda q}.
\end{equation*}
Successive differentiations of $S^{un}_{soc}$ gives us
\begin{eqnarray*}
  \frac{{\rm d}S^{un}_{soc}(q)}{{\rm d}q}&=&R\lambda-C\frac{\lambda(\lambda_s+\mu)}{(\mu-\lambda q)^2},\\
  \frac{{\rm d}^2S^{un}_{soc}(q)}{{\rm d}q^2}&=&-\frac{2C\lambda^2(\lambda_s+\mu)}{(\mu-\lambda q)^3}.
\end{eqnarray*}
Under the stability condition of the model: $\lambda+\lambda_s<\mu$, which has been assumed throughout the paper, it is obvious that $\displaystyle \frac{{\rm d}^2S^{un}_{soc}(q)}{{\rm d}q^2}<0$ for $q\in[0,1]$ and that $S^{un}_{soc}(q)$ is a concave function. The solution to $ \displaystyle \frac{{\rm d}S^{un}_{soc}(q)}{{\rm d}q}=0$ is $\displaystyle q'=\frac{\mu}{\lambda}-\frac{1}{\lambda}\sqrt{\frac{C(\lambda_s+\mu)}{R}}$.
\begin{description}
  \item[Case 1: ] When $ \displaystyle R\leq C\frac{\lambda_s+\mu}{\mu^2}$, i.e. $q'\leq0$ and $\displaystyle \frac{{\rm d}S^{un}_{soc}(q)}{{\rm d}q}\leq0$, which implies that $S^{un}_{soc}(q)$ is strictly decreasing in $q$ and the unique maximum is attained at $q=0$. Thus, the best response is $q^*=0$;
  \item[Case 2: ]  When $ \displaystyle C\frac{\lambda_s+\mu}{\mu^2}<R<C\frac{\lambda_s+\mu}{(\mu-\lambda)^2}$, $0<q'<1$ and the unique maximum of $S^{un}_{soc}(q)$ is  attained at $q=q'$, i.e. $q^*=q'$;
  \item[Case 3: ]  When $ \displaystyle R\geq C\frac{\lambda_s+\mu}{(\mu-\lambda)^2}$, $\displaystyle \frac{{\rm d}S^{un}_{soc}(q)}{{\rm d}q}\geq0$. $S^{un}_{soc}(q)$ is strictly increasing in $q$ and the unique maximum is attained at $q=1$. Thus, the best response is $q^*=1$.
\end{description}
\end{proof}

\section{$\epsilon$-Nash Equilibrium for the original model}\label{Sec:4}

In this section, we prove that the Nash equilibrium in the mean field model is an $\epsilon$-Nash equilibrium in the finite $N$ model.
Before proceeding, it is worth noting that in Dawson, Tang and Zhao's work~\cite{DTZ}, for the model without game, they assumed that the initial state of process $\{U^N_k(t), k=0,1,2,\ldots\}$ must be $U^N_0(0)=1$ and under this condition the approximate stationary distribution, \eqref{pi0}--\eqref{pik}, is derived. In the following, we provide an alternative method to prove  $\lim\limits_{N\to\infty}\lim\limits_{t\to\infty}u^N(t,u(0))=\pi,$ for any initial state $u(0)$.

\begin{lemma}
From any initial state, the probability that the system (the typical queue) is empty at time $t$ ($t>0$) is larger than zero, i.e., $u_0(t)>0$.
\end{lemma}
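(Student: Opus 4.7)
The plan is to work directly with the mean field ODE for the typical queue. Recall that in the mean field limit, $u_k(t)$ satisfies
\[
\frac{du_0}{dt}=\mu u_1-\lambda u_0-\lambda_s\mathbf{1}_{u_0>0},\qquad
\frac{du_k}{dt}=\lambda u_{k-1}-(\lambda+\mu)u_k+\mu u_{k+1}+\lambda_s\bigl(\mathbf{1}_{k^*=k-1}-\mathbf{1}_{k^*=k}\bigr)\quad(k\geq 1),
\]
where $k^*(u)=\min\{k:u_k>0\}$. I would split the argument into two parts, corresponding to the two ways $u_0$ could fail to be strictly positive.

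First, I would rule out the possibility that $u_0\equiv 0$ on any non-degenerate interval $[t_1,t_2]$. Under that hypothesis, $\frac{du_0}{dt}=0$ on the interval forces $u_1\equiv 0$ there; then the equation for $u_1$ evaluated on $\{u_0=u_1=0\}$, where both $\lambda_s$ indicators vanish since $k^*\geq 2$, forces $u_2\equiv 0$; induction on $k$ yields $u_k\equiv 0$ for every $k$, contradicting the normalization $\sum_k u_k(t)=1$.

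Second, I would use a cascade driven by the service rate $\mu$ to show that $u_0$ becomes strictly positive immediately after $t=0$. Set $k_0:=\min\{k:u_k(0)>0\}$, which is finite by normalization. If $k_0=0$ the conclusion is immediate by continuity. Otherwise a Taylor expansion of the ODE yields $u_{k_0-1}(t)=\mu u_{k_0}(0)\,t+O(t^2)$, and inductively $u_{k_0-j}(t)=\frac{\mu^j u_{k_0}(0)}{j!}t^j+O(t^{j+1})$ for $j=1,\dots,k_0$; in particular $u_0(t)$ carries a strictly positive leading term $\mu^{k_0}u_{k_0}(0)t^{k_0}/k_0!$, so $u_0(t)>0$ for all sufficiently small $t>0$. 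Applying the same cascade at any candidate later time $t^*$ at which $u_0$ might vanish, combined with the first step, yields $u_0(t)>0$ for every $t>0$.

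The main obstacle will be the non-smoothness of the vector field across $\{u_0=0\}$ caused by the indicator $\mathbf{1}_{u_0>0}$. I would handle this by interpreting the ODE in the Carath\'eodory/integral sense, and by observing that at any would-be isolated zero $t^*$ the one-sided time derivatives of $u_0$ are $\mu u_1(t^*)-\lambda_s$ from the left and $\mu u_1(t^*)$ from the right; combining this constraint with the cascade restarted at $t^*$ (which produces $u_0>0$ on a right-neighbourhood of $t^*$) and with the ``no interval of zeros'' step excludes both possibilities and completes the proof.
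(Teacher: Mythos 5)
Your route is genuinely different from the paper's: the paper never analyzes the ODE. It reads $u_0(t)$ as the probability that the limiting typical queue is empty at time $t$ and argues by contradiction that the transition probability from any initially occupied level $k$ to the empty level over $(0,t_0]$ is strictly positive (some finite number $i$ of arrivals occur and all $i+k$ customers are served), so $u_0(t_0)\geq u_k(0)P_{u_k(0)\to u_0(t_0)}>0$. You instead try to propagate positivity down the levels through the forward equations by a Taylor cascade. That is a legitimate thing to attempt, and your first step (if $u_0\equiv 0$ on an interval then, with the indicator switched off on the boundary, $\mu u_1\equiv 0$, hence inductively $u_k\equiv 0$ for all $k$, contradicting normalization) is sound as far as it goes.

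The gap is in the lift-off half, and it occurs at exactly the level where you need it. The instant $u_0$ becomes positive, $\frac{du_0}{dt}$ acquires the $O(1)$ sink $-\lambda_s$, whereas the source $\mu u_1(t)$ that is supposed to generate the leading term $\mu^{k_0}u_{k_0}(0)t^{k_0}/k_0!$ is only of order $t^{k_0-1}$ near $t=0$. Hence for small $t$ the vector field on $\{u_0>0\}$ satisfies $\frac{du_0}{dt}\approx -\lambda_s<0$ whenever $\mu u_1<\lambda_s$, and $u_0$ cannot lift off; the claimed expansion of $u_0$ is not obtained by integrating the equation you wrote down. The same objection hits every level $j$ the moment it becomes the minimum $k^*$, since the equation for $u_{k^*}$ carries the term $-\lambda_s\mathbf{1}_{k^*=j}$, which dominates the $O(t^{k_0-j})$ source unless $\mu u_{j+1}>\lambda_s$ already holds. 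Your treatment of a candidate zero $t^*$ inherits the problem: if $u_0>0$ on a right neighbourhood of $t^*$ then the relevant one-sided derivative there is $\mu u_1(t^*)-\lambda_s$ (the indicator is on), not $\mu u_1(t^*)$, and lift-off requires $\mu u_1(t^*)\geq\lambda_s$ --- a condition you have not verified and which fails for small $t$ under, e.g., $u(0)=\delta_{k_0}(\cdot)$ with $k_0\geq 2$. Repairing the argument would require first establishing a lower bound of the form $\mu u_1(t)>\lambda_s$ before invoking the cascade at level $0$, which is a statement about the system having already relaxed, not a small-$t$ Taylor statement; this is precisely the boundary-layer difficulty that the paper's probabilistic argument (and the assumption $U^N_0(0)=1$ in the reference it builds on) is designed to step around.
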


\begin{proof}
We prove this lemma by the contradiction argument. For any initial state $u(0)$, suppose that $\exists$ $t_0>0$ such that $u_0(t_0)=0$.
Without 
loss of generality, let $u_k(0)$ be a nonzero element of $u(0)$, denoting the probability of the system with $k$ customers at time $0$.
Then, the transition probability $P_{u_k(0)\to u_0(t_0)}$ from $u_k(0)$ to $u_0(t_0)$ would be zero, which contradicts to the following:
\begin{eqnarray}
  \nonumber  P_{u_k(0)\to u_0(t_0)}&=&\sum\limits^{\infty}\limits_{i=0}P\left(\text{ there are $i$ new customers arrive at the system during $(0,t_0]$}\right)\\
  &\times &P\left(\text{there are $i+k$ customers served during $(0,t_0]$}\right)\\
  &=&\sum\limits^{\infty}\limits_{i=0}P\left(\sum\limits_{n=1}^{i}a_{n}<t_0,\sum\limits_{n=1}^{i+1}a_{n}>t_0\right)\times P\left(\sum\limits_{n=1}^{i+k}s_{n}<t_0\right)
>0,
\end{eqnarray}
where $s_n$ and $a_n$ denote the service time of the $n$-customer and the interarrival time between the $(n-1)$-st and the $n$-th customers, respectively.
\end{proof}

\begin{theorem}\label{gas}
\begin{equation*}
  \lim\limits_{t\to\infty}u(t,u(0))=\pi, \text{ for any initial state } u(0),
\end{equation*}
i.e. the fixed point $\pi$ of the limiting system is globally asymptotically stable.
\end{theorem}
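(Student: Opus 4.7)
The plan is to exploit an exact linearization: once recentered at $\pi$, the mean-field ODE turns into an autonomous linear equation that is nothing but the forward Kolmogorov equation of a stable M/M/1 queue. Global asymptotic stability of $\pi$ then reduces to the classical ergodicity of that M/M/1 chain.

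First I would write out the mean-field ODE for $u(t)$. By the preceding lemma $u_0(t)>0$ for every $t>0$, so the smart arrivals behave as a constant flux $\lambda_s$ from state $0$ to state $1$, and the system reads
\begin{equation*}
\dot u_0 = \mu u_1 - \lambda q\,u_0 - \lambda_s, \qquad \dot u_k = \lambda q\,u_{k-1} - (\mu+\lambda q)u_k + \mu u_{k+1} + \lambda_s\,\delta_{k,1}\quad (k\ge 1).
\end{equation*}
Substituting $w_k := u_k - \pi_k$ and invoking the balance identities that make $\pi$ stationary, namely $\mu\pi_1 = \lambda q\,\pi_0 + \lambda_s$ and $\lambda q\,\pi_{k-1} + \mu\pi_{k+1} = (\mu+\lambda q)\pi_k$ for $k\ge 1$, both of which follow directly from \eqref{pi0}--\eqref{pik}, the constant forcing $\lambda_s$ cancels termwise and the equation collapses to the autonomous linear system $\dot w = A w$, where $A$ is the transpose of the generator of an M/M/1 queue with arrival rate $\lambda q$ and service rate $\mu$. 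Under the stability assumption $\lambda+\lambda_s<\mu$ we have $\sigma := \lambda q/\mu<1$, so this M/M/1 is positive recurrent with geometric stationary law $\nu_k = (1-\sigma)\sigma^k$.

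To deduce $w(t)\to 0$, I would use a Jordan-type decomposition. Both $u(0)$ and $\pi$ are probability distributions, so $w(0)$ is a finite signed measure with $\sum_k w_k(0) = 0$; write $w(0) = w^+(0) - w^-(0)$ with $w^\pm(0)\ge 0$ and $c := \|w^+(0)\|_1 = \|w^-(0)\|_1$. By linearity $w(t) = w^+(t) - w^-(t)$, and since the semigroup $e^{At}$ is the forward semigroup of an honest CTMC it preserves positivity and total mass, so $w^\pm(t)/c$ is a probability distribution for every $t\ge 0$. Classical ergodicity of an M/M/1 with $\sigma<1$ gives $w^\pm(t)/c \to \nu$ in total variation as $t\to\infty$; subtracting yields $\|w(t)\|_1\to 0$, hence $u(t,u(0))\to\pi$.

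The main obstacle I foresee is the infinite-dimensional rigor: one must verify that the mildly nonlinear mean-field ODE admits a unique solution in $\ell^1(\mathbb{Z}_+)$ for every initial distribution, that the recentering and termwise cancellation of $\lambda_s$ are legitimate (this is where the previous lemma is essential, as it guarantees $u_0(t)>0$ and so keeps the smart-arrival flux well defined), and that the abstract flow $e^{At}$ really coincides with the honest M/M/1 semigroup on $\ell^1$. This last point is the usual nonexplosion question for birth-death chains, which is automatic when $\sigma<1$. As an alternative, should the identification of the semigroup prove awkward, one could instead take the Lyapunov function $V(u) = \sum_k (u_k-\pi_k)^2/\nu_k$, for which $\dot V$ equals (up to a factor) the negative Dirichlet form of the M/M/1 on $L^2(\nu)$ restricted to the mean-zero subspace, and then invoke LaSalle's invariance principle; that route is cleaner conceptually but requires an extra second-moment hypothesis on $u(0)$ together with an approximation step, so the decomposition argument above seems preferable for the theorem as stated.
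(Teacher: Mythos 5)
Your proposal is correct in substance but takes a genuinely different route from the paper. Both arguments begin identically: using the lemma that $u_0(t)>0$ for $t>0$, the smart-arrival flux is pinned at level $0$, and after recentering at $\pi$ the balance identities cancel the constant forcing, leaving exactly the linear system $\dot\epsilon_0=-\lambda q\,\epsilon_0+\mu\epsilon_1$, $\dot\epsilon_i=\lambda q\,\epsilon_{i-1}-(\lambda q+\mu)\epsilon_i+\mu\epsilon_{i+1}$ (this is precisely \eqref{de1}--\eqref{de2}). Where you diverge is in how you conclude $\epsilon(t)\to 0$. The paper follows Mitzenmacher and constructs a weighted $\ell^1$ Lyapunov function $V=\sum_i w_i|\epsilon_i|$, choosing the weights recursively so that $\dot V<0$, and must separately handle the non-differentiability of $|\cdot|$ at zero via right-hand derivatives. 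You instead recognize the recentered system as the forward Kolmogorov equation of an M/M/1 chain with rates $\lambda q$ and $\mu$, perform a Jordan decomposition of the zero-mass signed measure $\epsilon(0)=w^+(0)-w^-(0)$, propagate each part as a (rescaled) probability law under the honest, mass- and positivity-preserving semigroup, and invoke classical ergodicity of the positive recurrent M/M/1 to get total-variation convergence of both parts to the same geometric law, hence $\|\epsilon(t)\|_1\to 0$. Your route is cleaner and arguably more rigorous for this particular model: the generator has bounded rates, so $e^{At}$ is a bounded semigroup on $\ell^1$ that coincides with the transition semigroup, and you sidestep both the delicate choice of positive weights and the sign-function technicalities; you also obtain explicit $\ell^1$ convergence rather than convergence inferred from a Lyapunov theorem. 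What the paper's approach buys in exchange is robustness: the weighted-$\ell^1$ Lyapunov technique extends to genuinely nonlinear mean-field ODEs (such as power-of-$d$-choices systems) where no exact linearization to a Markov semigroup is available, whereas your argument is special to the fact that recentering here produces an exactly linear, conservative system. The caveats you flag (well-posedness in $\ell^1$, legitimacy of the termwise cancellation, identification of $e^{At}$ with the honest chain) are the right ones and are all dispatched by the boundedness of the rates and the lemma on $u_0(t)>0$, so no genuine gap remains.
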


\begin{proof}
Motivated by the proof provided in \cite{MM2}, we define the following Lyapunov function
\begin{equation*}
  V(t)=\sum\limits_{i=1}\limits^{\infty}w_i\mid\epsilon_{i}(t)\mid,
\end{equation*}
where $w_i>0$ and $\epsilon_{i}(t)=u_i(t)-\pi_i$ for $i=1,2,3,\ldots$.
Suppose that $t_0>0$. For any $t\in[t_0,\infty)$, according to Lemma 4.1 and
 $\displaystyle\frac{{\rm d}\epsilon_i(t)}{{\rm d}t}=\frac{{\rm d}u_i(t)}{{\rm d}t}$, the following differential equations can be obtained based on the results of~\cite{DTZ}
and by noticing that $\pi$ is the fixed point:
\begin{eqnarray}
\label{de1}\frac{{\rm d}\epsilon_{0}(t)}{{\rm d}t}&=&-\lambda\epsilon_0+\mu\epsilon_1,\\
\label{de2}\frac{{\rm d}\epsilon_{i}(t)}{{\rm d}t}&=&\lambda\epsilon_{i-1}-(\lambda+\mu)\epsilon_{i}+\mu\epsilon_{i+1}.
\end{eqnarray}

In the following analysis, we firstly assume that $\epsilon_i(t)\neq 0$ for all $i\ge 1$ and all $t\ge 0$, because when $\epsilon_i(t)= 0$ at  $t$ for some $i\ge 1$, the derivative $dV/dt$ of $V$ with respect to $t$ is not well defined, and this case will be treated at the end of the proof.

The derivative of $V$ with respect to $t$ can be written as
\begin{equation}\label{dv}
  \displaystyle\frac{dV}{dt}=\sum\limits_{i=0}\limits^{\infty}w_i\cdot \mbox{sgn}(\epsilon_{i}(t))\frac{{\rm d}(\epsilon_{i}(t))}{{\rm d}t},
\end{equation}
where $\displaystyle \mbox{sgn}(x)=\left\{\begin{array}{ll}
                1, & x>0,\\
                0, & x=0,\\
                -1, & x<0.
              \end{array}\right.$

Substitute the differential equations in \eqref{de1}--\eqref{de2} for those in \eqref{dv}, and collect all terms involving  $\epsilon_{i}^u(t)$ ($i=1,2,3,\ldots,$) to give
\begin{eqnarray}
 \nonumber\hspace{-3mm}T(\epsilon_{i}(t))&\equiv& w_{i-1}\text{sgn}(\epsilon_{i-1}(t))\mu\epsilon_i(t)-w_{i}\text{sgn}(\epsilon_{i}(t))(\lambda+\mu)\epsilon_i(t)\\&
+&w_{i+1} \text{sgn}(\epsilon_{i+1}(t))\lambda\epsilon_i(t).
\end{eqnarray}
 Note that $i=0$ is a special case, which can be included in the above form by taking $w_{-1}=0$.

%
%
%

 We can choose appropriate positive $w_{i-1}$, $w_i$ and $w_{i+1}$ (except for $w_{-1}=0$) such that $ T(\epsilon_{i}(t))$ are negative for all $i=0,1,2,\ldots$. Specifically, by setting $w_{0}=1$, we choose $w_{i+1}$, $i=1,2,\ldots$, such that the following inequalities hold for respective values of $i$:
\begin{eqnarray} 
 w_1&<&w_0,\\
 w_{i+1}&<&w_i+\frac{\mu}{\lambda}(w_i-w_{i-1}).
\end{eqnarray}
This can be done from $T(\epsilon_{i}(t))<0$ and when $\mbox{sgn}(\epsilon_{i-1}(t))=\mbox{sgn}(\epsilon_{i}(t))=\mbox{sgn}(\epsilon_{i+1}(t))=1$ or $-1$. Otherwise, it is easy to see that the value of $T(\epsilon_{i})$ is decreasing.

The above selections yield $\displaystyle\frac{{\rm d}V}{{\rm d}t}<0$  for $\epsilon \neq 0$. Applying Lyapunov Theorem, we see that $\lim\limits_{t\to\infty}u(t,u(t_0))=\pi$. Then, it follows that
\[\lim\limits_{t\to\infty}u(t,u(0))=\lim\limits_{(t-t_0)\to\infty}u(t-t_0,u(t_0))=\pi,\]
i.e. the fixed point $\pi$ of the limiting system is globally asymptotically stable.

Finally, we discuss the case, in which $ \epsilon_{i}(t)=0$ for some $i$ at some $t$. Since we only study the forward progress of the system, it is reasonable and sufficient to consider the right-hand derivatives of $\epsilon_{i}(t)$ (For example, see reference \cite{YSK}, \cite{MM2}). The above proof prevails for right-hand derivatives.
\end{proof}

By invoking Prokhorov's theorem and applying the above theorem, we can establish $\lim\limits_{N\to\infty}u^N(t,c)=\pi$, which is shown in the following theorem.
\begin{theorem}\label{main result}
\begin{equation*}
 \lim\limits_{N\to\infty}\Pi_{N}=\delta_{\pi},
\end{equation*}
or equivalently $\pi^N=u^N(\infty)\to \pi$, where $\Pi_{N}$ is the stationary distribution of $\displaystyle \{(U^{(N)}(t),t\geq 0\}$.
\end{theorem}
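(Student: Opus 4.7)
The plan is to combine a Prokhorov tightness argument with the global asymptotic stability proved in Theorem~\ref{gas} to identify every subsequential weak limit of $\Pi_N$ as $\delta_\pi$.

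First, I would establish tightness of $\{\Pi_N\}_{N \ge 1}$. Each $\Pi_N$ is supported on the simplex $\mathcal{S}=\{u=(u_0,u_1,\ldots) : u_k \ge 0,\ \sum_k u_k = 1\}$, which sits inside $[0,1]^{\mathbb{N}}$ and is compact in the product topology by Tychonoff's theorem. Hence tightness is automatic, and Prokhorov's theorem guarantees that every subsequence of $\{\Pi_N\}$ admits a further subsequence $\Pi_{N_k} \Rightarrow \Pi_\infty$ for some probability measure $\Pi_\infty$ on $\mathcal{S}$. Since a sequence with a unique weak limit point converges, it suffices to show $\Pi_\infty = \delta_\pi$ for every such limit point.

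Second, I would identify $\Pi_\infty$ as an invariant measure for the deterministic mean field flow. Start the finite-$N_k$ processes in stationarity: $U^{N_k}(0) \sim \Pi_{N_k}$. By Skorokhod representation, realize these so that $U^{N_k}(0) \to U_0$ almost surely with $U_0 \sim \Pi_\infty$. The mean field limit result of Dawson, Tang and Zhao~\cite{DTZ} then yields $U^{N_k}(t) \Rightarrow u(t,U_0)$ for each fixed $t \ge 0$, where $u(t,\cdot)$ is the deterministic ODE flow. Stationarity forces $U^{N_k}(t) \sim \Pi_{N_k} \Rightarrow \Pi_\infty$, so the law of $u(t,U_0)$ equals $\Pi_\infty$ for every $t \ge 0$; in other words, $\Pi_\infty$ is invariant under the mean field flow.

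Third, I would invoke Theorem~\ref{gas}. For any bounded continuous $f$ on $\mathcal{S}$, invariance gives
\begin{equation*}
\int_{\mathcal{S}} f\, d\Pi_\infty \;=\; \int_{\mathcal{S}} f(u(t,u_0))\, d\Pi_\infty(u_0) \;\xrightarrow[t\to\infty]{}\; f(\pi),
\end{equation*}
where the limit is justified by bounded convergence together with the pointwise convergence $u(t,u_0) \to \pi$ supplied by Theorem~\ref{gas}. Thus $\int f\, d\Pi_\infty = f(\pi)$ for every bounded continuous $f$, forcing $\Pi_\infty = \delta_\pi$. Combined with the first paragraph, this yields $\Pi_N \Rightarrow \delta_\pi$, i.e.\ $\pi^N \to \pi$.

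The main obstacle is making step two fully rigorous: the DTZ mean field theorem is ordinarily stated for a deterministic or converging initial condition, so to propagate the weak convergence $U^{N_k}(0) \Rightarrow U_0$ to $U^{N_k}(t) \Rightarrow u(t,U_0)$ I need continuity of the flow map $u_0 \mapsto u(t,u_0)$ in the product topology, plus the Skorokhod coupling above. One should also verify that the product topology on $\mathcal{S}$ is compatible with the natural class of bounded continuous test functions (cylinder functions suffice to determine a measure on $\mathcal{S}$), and that trajectories $u(t,u_0)$ remain in $\mathcal{S}$ for all $t \ge 0$ so the invariance identity is well defined.
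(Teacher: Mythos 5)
Your proposal follows essentially the same route as the paper's own proof: tightness of $(\Pi_N)_N$ from compactness of the state space together with Prokhorov's theorem, then identification of every subsequential weak limit as $\delta_{\pi}$ by starting the finite-$N$ processes in stationarity, passing to the mean field limit, and invoking the global asymptotic stability of Theorem~\ref{gas}. Your write-up is in fact more detailed than the paper's one-line treatment of the invariance-plus-stability step, and the technical caveats you flag (continuity of the flow map, the topology on the simplex, preservation of the simplex by the flow) are precisely the points the paper leaves implicit.
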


\begin{proof}
According to Theorem A.1 
in~\cite{DTZ}, the sequence of probability measures $(\Pi_N)_N$ is tight, since the state space is compact. Applying the  Theorem 2.2 (p.104) of \cite {EK}, we know that $(\Pi_N)_N$ is relatively compact and has limit points. Thus, we now only need to show that all convergent subsequences share the same limiting point, which is exactly $\delta_{\pi}$.

Let $(\Pi_{N_k})_k$ denote an arbitrary subsequence of $(\Pi_N)_N$, which converges to the distribution $\widetilde{\Pi}$.
Then, let the process $
\{ U^{N_k}(t),t\ge 0\}
$, $
\{U(t),t\ge 0\}
$ start from stationary state $\Pi_{N_k}$ and $\widetilde{\Pi}$, respectively. According to the Theorem in~\cite{DTZ} and Theorem \ref{gas}, it follows that $\{U^{N_k}(t)\}\longrightarrow\{U(t)\}$ in the sense of weak convergence for processes and $\Pi_{N_k}\longrightarrow\delta_{\pi}$. 
Thus, the theorem is proved.
\end{proof}

In what follows, we study the relationship of customers' strategic behaviours in the finite $N$ model and the corresponding mean field limit model.

\begin{theorem}
If $q^e$ is a Nash equilibrium of the dedicated customers for the mean field model in the partially observable case, then $q^e$ is a $\epsilon$-Nash equilibrium in the finite $N$ model for $N$ sufficiently large.
\end{theorem}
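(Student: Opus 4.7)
The plan is to compare the payoff of a tagged dedicated customer in the finite $N$ model with the corresponding payoff in the mean field limit, and to use Theorem \ref{main result} together with the mean field convergence established in~\cite{DTZ} to control the gap uniformly in the tagged customer's own strategy.

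First I would fix $q^e$ as in the statement and denote by $S^N(q',q^e)$ the expected utility of one tagged dedicated customer who joins with probability $q'$ while every other dedicated customer uses strategy $q^e$ in the finite $N$ system; write $S(q',q^e)=q'\bigl(R-CE(W(q^e))\bigr)$ for the corresponding mean field payoff. By Theorem \ref{main result} the stationary distribution of $U^N$ under the strategy profile $(q^e,\ldots,q^e)$ converges weakly to the point mass at $\pi$, so the tagged customer's host queue has a stationary law converging to $\pi$ and its conditional mean sojourn time, given that it joins, converges to $E(W(q^e))$. Since both $S^N(\cdot,q^e)$ and $S(\cdot,q^e)$ are affine in the first argument, the pointwise convergence $S^N(q',q^e)\to S(q',q^e)$ upgrades automatically to uniform convergence on $[0,1]$.

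Given $\epsilon>0$, I would then choose $N$ large enough so that $\sup_{q'\in[0,1]}|S^N(q',q^e)-S(q',q^e)|<\epsilon/2$. Because $q^e$ is a Nash equilibrium in the mean field model, $S(q^e,q^e)\geq S(q',q^e)$ for every $q'\in[0,1]$, hence
\[ S^N(q^e,q^e)\geq S(q^e,q^e)-\epsilon/2\geq S(q',q^e)-\epsilon/2\geq S^N(q',q^e)-\epsilon, \]
which is exactly the $\epsilon$-Nash property in the finite $N$ model.

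The step I expect to be the main obstacle is the convergence of the tagged customer's sojourn time under a unilateral deviation $q'\neq q^e$, which breaks the exchangeability of the $N$ queues and therefore sits slightly outside the symmetric framework of \cite{DTZ} and Theorem \ref{main result}. I would handle it by coupling the actual $N$-queue system with an auxiliary one in which every dedicated customer, including the tagged one, uses $q^e$: the two trajectories differ only through at most one Poisson arrival thinning of rate at most $\lambda$, so the total variation distance between the host queue's stationary laws in the two systems is $O(1/N)$ and vanishes in the limit. The interchange of the limits $N\to\infty$ and $t\to\infty$ needed along the way is already supplied by Theorem \ref{main result}, so the remaining technical work is confined to this single-customer coupling estimate.
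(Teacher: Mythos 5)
Your final chain of inequalities $S^N(q^e)\geq S(q^e)-\epsilon/2\geq S(q)-\epsilon/2\geq S^N(q)-\epsilon$ is exactly the paper's, and both arguments rest on Theorem \ref{main result}, so the skeleton is the same. The differences lie in how the $\epsilon/2$-closeness of payoffs is obtained. The paper never introduces the two-argument payoff $S^N(q',q^e)$: it compares the population-level utilities $S^N(q)$ and $S(q)$ (everyone using $q$), so the unilateral-deviation issue you single out as the main obstacle is simply not addressed there; your tagged-customer formulation and the $O(1/N)$ coupling sketch are a more faithful rendering of what an $\epsilon$-Nash equilibrium requires, and that part of your proposal is genuinely more careful than the published proof, though the coupling estimate itself is only sketched. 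Conversely, the step you gloss over is the one the paper makes explicit: weak convergence $\Pi_N\to\delta_\pi$ does not by itself give convergence of the mean sojourn time, since $W^N$ is an unbounded functional of the state. The paper closes this by bounding $E(W^N(q))\leq\frac{1}{\mu}\left(E(L^N(1))+1\right)$ uniformly in $q$ and invoking uniform integrability of $W^N$ before concluding $|E(W^N(q))-E(W(q))|\leq\epsilon/(2C)$. Your assertion that ``the conditional mean sojourn time converges to $E(W(q^e))$'' needs the same moment/uniform-integrability control; without it the pointwise convergence $S^N(q',q^e)\to S(q',q^e)$ that you feed into the affine-functions-converge-uniformly step is unjustified. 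Supply that bound and your argument is complete, and in the deviation-handling respect somewhat stronger than the paper's.
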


 \begin{proof}
According to Theorem \ref{main result}, $E(L^N(1))$ is bounded, since
\begin{equation*}
  E(L^N(1))\to E(L(1)),  \text{  as } N\to \infty,
\end{equation*}
where $E(L^N(1))$ and $E(L(1))$ are the mean queue length of the original $N$ system and the mean field model with arrival rate $\lambda$.

The mean sojourn time for a customer from dedicated arrival stream is bounded, since
\begin{equation*}
  E(W^N(q))=\frac{1}{\mu}\left(E(L^N(q))+1\right)\leq\frac{1}{\mu}\left(E(L^N(1))+1\right).
\end{equation*}
 Hence,
\begin{equation*}
  \sup\limits_{N}E|W^N|< \infty,
\end{equation*}
and $W^N$ is uniformly integrable. By the definition of uniform integrable and Theorem \ref{main result}, we obtain that for any $ 
\epsilon >0$, there exists an $
N_0$,  such that when $N>N_0$,
\begin{eqnarray}
 \nonumber \mid E(W^N(q))-E(W(q)) \mid\hspace{-3mm}&=&\hspace{-3mm}\mid \sum\limits_{k=0}\limits^{\infty}\frac{k+1}{\mu}\pi^N_k-\sum\limits_{k=0}\limits^{\infty}\frac{k+1}{\mu}\pi_k \mid\\
 \nonumber&\leq&\frac{\epsilon}{2C},
\end{eqnarray}
and then
\begin{equation*}
  \mid S^N(q)-S(q) \mid\leq\frac{\epsilon}{2},
\end{equation*}
where $E(W^N(q))$ and $S^N(q)$, ($E(W(q))$ and $S(q)$), are the mean sojourn time and utility for a customer from a dedicated arrival stream in the finite $N$ model (mean field model).

Then, it follows that
\begin{equation*}
  S^N(q^e)>S(q^e)-\frac{\epsilon}{2}>S(q)-\frac{\epsilon}{2}>S^N(q)-\epsilon, \text{ for all }q.
\end{equation*}
Hence, $q^e$ is an $\epsilon$-Nash equilibrium in the finite $N$ system.
\end{proof}

Similarly, we can conclude that $q^*$ is an approximation to the optimal social benefits strategy in the finite $N$ model.

\section{Numerical Analysis}\label{Sec:5}
In this section, the effects of the system parameters on the behaviours of customers and social benefits are discussed. We explain the practical meaning according to the numerical results.

Figures \ref{strategyvsmu}--\ref{strategyvsR} and Figures \ref{socvsmu}--\ref{socvsR} imply the impacts of parameters, $\mu$, $\lambda_s$, $R$, on the joining probabilities and the social benefits regarding equilibrium and optimal strategies, respectively. From the first three figures, it is obvious that $q^e\geq q^*$, which implies that independent customers are inclined to abuse the system. Meanwhile, one is considerate when optimizing social benefits. The optimal joining probabilities are always not greater than the equilibrium joining probabilities but the social benefits with the optimal joining probabilities are larger than that with equilibrium joining probabilities, which is clear from Figure \ref{socvsmu}--\ref{socvsR}.

\begin{figure}[htbp]
\centering
\includegraphics[scale=0.7]{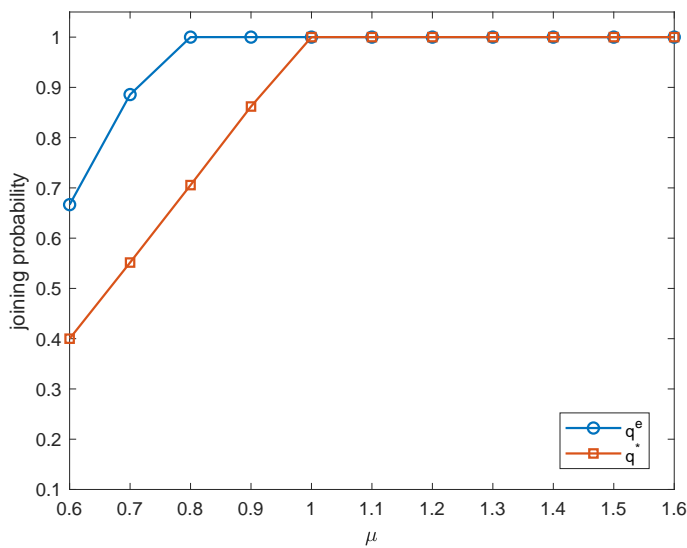}
\caption{Equilibrium and socially optimal joining probabilities vs. $\mu$ for $R=3,C=1.1,\lambda=0.5,\lambda_s=0.2$}\label{strategyvsmu}
\end{figure}
\begin{figure}[htbp]
\centering
 \includegraphics[scale=0.7]{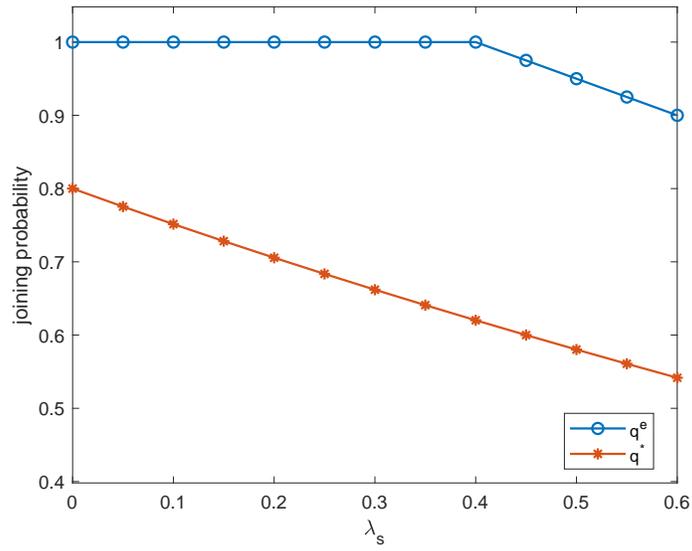}
\caption{Equilibrium and socially optimal joining probabilities vs. $\lambda_s$ for $R=3,C=1.1,\mu=1.3,\lambda=0.59$}\label{strategyvslambdas}
\end{figure}
\begin{figure}[htbp]
\centering
\includegraphics[scale=0.7]{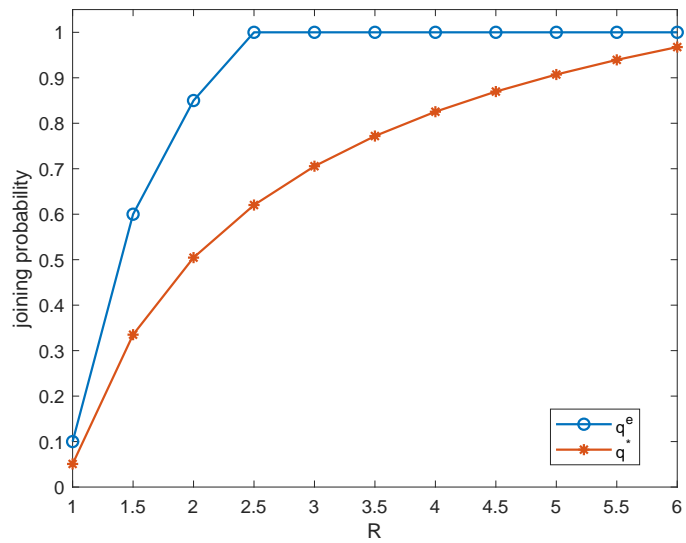}
\caption{Equilibrium and socially optimal joining probabilities vs. R for $C=2,\mu=1.3,\lambda=0.5,\lambda_s=0.5$}\label{strategyvsR}
\end{figure}

Figure \ref{strategyvsmu} depicts that the joining probabilities increase with the growth of service rate $\mu$, as expected, which reflects the fact that the larger the service rate is, the shorter the customer's expected waiting time is, both in maximizing individual or social benefits. In Figure \ref{strategyvslambdas}, both equilibrium and optimal joining probabilities are non-increasing with respect to  $\lambda_s$. It describes the fact that when arrival rates $\lambda$ and $\lambda_s$ increase, customers prefer to balk since they think that the system is more crowded and loaded. The result shown in Figure \ref{strategyvsR} is easy to understand: higher reward drives more customers to join the system.

\begin{figure}[htbp]
\centering
\includegraphics[scale=0.7]{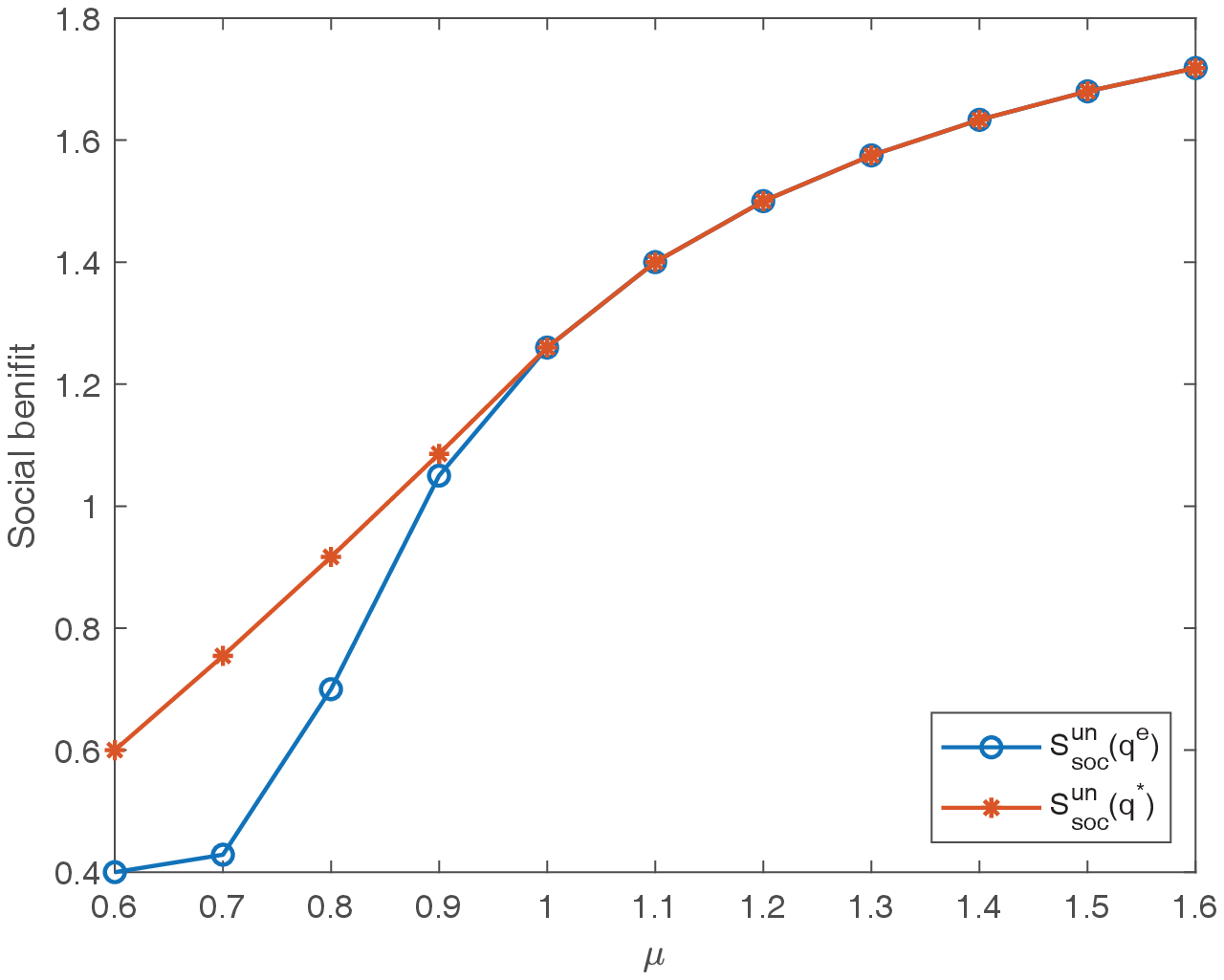}
\caption{Social benefits per time unit vs. $\mu$ for $R=3,C=1.1,\lambda=0.5,\lambda_s=0.2$}\label{socvsmu}
\end{figure}

\begin{figure}[htbp]
\centering
\includegraphics[scale=0.7]{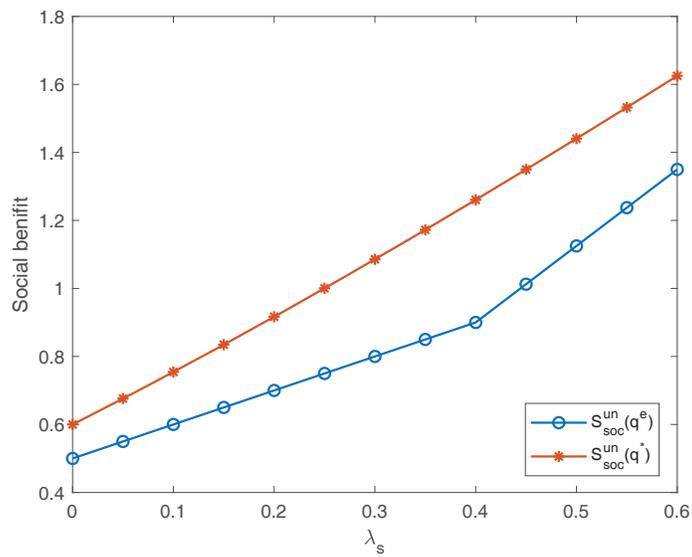}
\caption{Social benefits per time unit vs. $\lambda_s$ for $R=3,C=1.1,\mu=1.3,\lambda=0.59$}\label{socvslambdas}
\end{figure}

\begin{figure}[htbp]
\centering
 \includegraphics[scale=0.7]{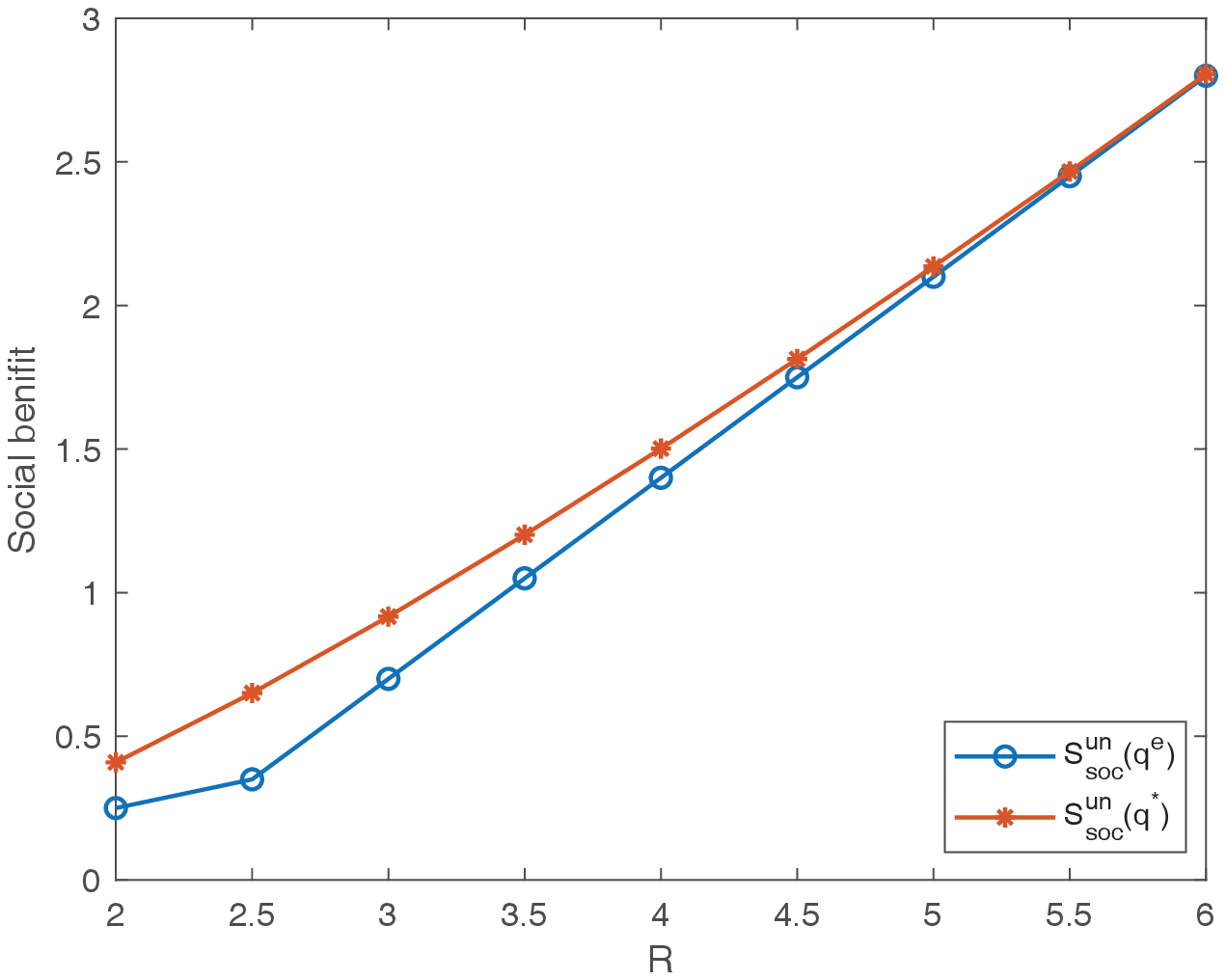}
\caption{Social benefits per time unit vs. R for $C=2,\mu=1.3,\lambda=0.5,\lambda_s=0.5$}\label{socvsR}
\end{figure}

Figures \ref{socvsmu} and \ref{socvsR} illustrate that the social benefits increase with respect to $\mu$ and $R$, respectively, when customers follow the social optimal joining strategy. This is true since higher service rate and more rewards are beneficial in obtaining larger social benefits.
The social benefits increases with increasing extra arrival rate $\lambda_s$ as shown in Figure \ref{socvslambdas}. This is true since the smart customer maximizes the benefits (its waiting time is zero).

\section{Conclusion}
We studied customers' strategic behaviours in a partially observable queueing system with $N$ nodes. In this queueing system, each node has a dedicated arrival stream and there is an extra arrival stream designed to balance the queue length. Using a mean field approximation, we first derived the equilibrium and optimal social strategies for a partially observable limit model. Then, we showed that the strategies obtained in the limit model can be used to approximate equilibrium strtegies for the finite $N$ model. We compared the joining probabilities and social benefits to the equilibrium and optimal strategies in numerical experiments and provided explanations for figures.
\section*{Acknowledgement}
This work was supported in part by the National Natural Science Foundation of China (No.61773014) and the Natural Sciences and Engineering Research Council of Canada.
We thanks the two anonymous reviewers for their constructive comments and remarks for improving the quality of the presentations of our work.

%
%

\end{document}